\documentclass[10pt,a4paper]{article}
\usepackage[latin1]{inputenc}
\usepackage{hyperref}
\usepackage{amsmath}
\usepackage{amsthm}
\usepackage{wasysym}
\usepackage{amsfonts}
\usepackage{enumerate}
\usepackage{amssymb}
\author{Randrianarisoa Tovohery Hajatiana}
\title{The number of matrices over $\F$ with irreducible characteristic
polynomial}
\date{}
\newtheorem{lem}{Lemma}

\newtheorem{thm}{Theorem}
\newtheorem{cor}{Corollary}
\newtheorem{defn}{Definition}



\newcommand{\M}{\mathbf{M}_n} 
\newcommand{\GL}{\mathbf{GL}_n} 
\newcommand{\F}{\mathbb{F}_q} 

\begin{document}
\maketitle
\begin{abstract}
Let $\F$ be a finite field with $q$ elements. M. Gerstenhaber
and Irving Reiner has given two different methods to show the number
of matrices with a given characteristic polynomial. In this talk, we
will give another proof for the particular case where the
characteristic polynomial is irreducible. The number of such matrices
is important to know the efficiency of an algorithm to factor
polynomials using Drinfeld modules.
\end{abstract}

In 1960, I. Reiner \cite{Rei60} and M. Gerstenhaber \cite{Ger60} gave two independent methods to show the formula of the number of matrices over a finite field with a given characteristic polynomials. Namely, if we set $\F$ to be the finite field with $q$ elements, then for $g(x)=f_1^{n_1}(x)f_2^{n_2}(x)\cdots f_r^{n_r}(x)$, with each $f_i$ irreducible and distinct to each other, the number of matrices with entries in $\F$ such that $g$ is its characteristic polynomial is,
\[
q^{n^2-n}\frac{F(q,n)}{\prod_{i=1}^{r}F\left(q^{d_i},n_i\right)},
\]
where $n$ is the degree of $g$, $d_i$'s are the degree of $f_i$'s and
\[
F(u,v)=\left(1-u^{-1}\right)\left(1-u^{-2}\right)\cdots \left(1-u^{-v}\right).
\]

Here we are interested only in the case where $g$ is irreducible. That is because this number of matrices is used to compute the efficiency of an algorithm for Factoring polynomials over finite field using Drinfeld modules  \cite{Hei04}. So, first let us restate the simple case we want to prove.

Let $\M$ be the set of $n\times n$ matrices with entries in $\F$, suppose $\GL$ is the set of all the invertible matrices in $\M$.

\begin{thm}\label{thm:main}
Let $f(x)\in \F(x)$ be an irreducible polynomial of degree $n$. Then, the number of matrices in $\M$ with characteristic polynomial $f$ is,
\[
\prod_{i=1}^{n-1}\left(q^n-q^i\right).
\]
\end{thm}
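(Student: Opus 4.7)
The plan is to view the set of matrices with a fixed irreducible characteristic polynomial $f$ as a single conjugacy class under $\GL$ acting on $\M$ by conjugation, and then to count that class via the orbit--stabilizer theorem.

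First I would observe that if $A\in\M$ has irreducible characteristic polynomial $f$ of degree $n$, then the minimal polynomial of $A$ must also be $f$: it divides $f$, shares the same irreducible factors, and cannot be $1$, so irreducibility forces equality. In particular $A$ is non-derogatory (its rational canonical form is the single companion block of $f$), and any two such matrices are similar to this companion matrix. Hence all matrices with characteristic polynomial $f$ form exactly one $\GL$-orbit under conjugation.

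Next I would compute the centralizer. Since $A$ is cyclic, a classical result identifies the centralizer of $A$ in $\M$ with the commutative subalgebra $\F[A]$ generated by $A$, which is isomorphic to $\F[x]/(f)\cong\mathbb{F}_{q^n}$ because $f$ is irreducible. Therefore the centralizer in $\GL$ consists of the invertible elements of $\F[A]$, i.e.\ $\mathbb{F}_{q^n}^{\times}$, a group of order $q^n-1$. The main technical point of the proof is this identification of the centralizer; I would either prove the ``cyclic $\Rightarrow$ centralizer is $\F[A]$'' lemma directly by working in the basis $\{v, Av, \dots, A^{n-1}v\}$ for a cyclic vector $v$, or cite it as a standard fact from rational canonical form theory.

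Finally I would apply orbit--stabilizer. Recalling the standard count
\[
|\GL|=\prod_{i=0}^{n-1}\left(q^n-q^i\right),
\]
the size of the orbit equals
\[
\frac{|\GL|}{q^n-1}=\frac{(q^n-1)(q^n-q)\cdots(q^n-q^{n-1})}{q^n-1}=\prod_{i=1}^{n-1}\left(q^n-q^i\right),
\]
which is the claimed count. The only genuine obstacle is the centralizer computation; once that is in hand, the rest is bookkeeping.
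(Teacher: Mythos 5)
Your proposal is correct and follows essentially the same route as the paper: a single conjugacy class via the rational canonical form, the identification $C_{\M}(A)=\F[A]\cong\F[x]/(f)$ of size $q^n$, and the orbit--stabilizer theorem with $|\GL|=\prod_{i=0}^{n-1}(q^n-q^i)$. The only cosmetic difference is that you deduce $\left|C_{\GL}(A)\right|=q^n-1$ from the fact that $\F[A]$ is a field (so every nonzero element is invertible), whereas the paper reaches the same conclusion via Schur's lemma applied to the irreducible set $\lbrace A\rbrace$.
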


To prove this we need some results in algebra. First of all, we have the Lemma of Schur and its consequence. Assume we have a set $S$ and two representations of $S$ into matrices in $\M$ i.e. we have two maps
\[
s\in S\mapsto R_1(S)\in \M\quad\text{and}\quad s\in S\mapsto R_2(S)\in \M
\]
\begin{defn}
A subset $A$ of $M_n$ is called irreducible if $\lbrace \mathbf{0}\rbrace$ and $\F^n$ are the only invariant subspace of $\F^n$ by elements of $A$.
\end{defn}
\begin{lem}[Schur]
Suppose $R_1$ and $R_2$ are irreducible representations of a set $S$ i.e. $R_1(S)$ and $R_2(S)$ are irreducible subset of $M_n$. Let $M\in \M$ such that, for all $s\in S$, $R_1(s)M=MR_2(s)$. Then either $M=\mathbf{0}$ or $M$ is irreducible. 
\end{lem}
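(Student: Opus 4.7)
The plan is the classical proof of Schur's lemma: examine $\ker M$, which the intertwining relation will force to be an invariant subspace of one of the two representations, and then use the irreducibility hypothesis to collapse it. I read the conclusion ``$M$ is irreducible'' as ``$M$ is invertible,'' i.e.\ $M\in\GL$, since the definition of irreducibility given in the excerpt applies to subsets of $\M$ rather than to an individual matrix, and $M\in\GL$ is both the standard Schur conclusion and what will be needed downstream to put $R_1$ and $R_2$ into the same conjugacy class.

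The main step is to observe that for every $v\in\ker M$ and every $s\in S$,
\[
M\bigl(R_2(s)v\bigr) \;=\; R_1(s)(Mv) \;=\; R_1(s)\cdot\mathbf{0} \;=\; \mathbf{0},
\]
so $R_2(s)v\in\ker M$. Thus $\ker M$ is an $R_2(S)$-invariant subspace of $\F^n$, and since $R_2(S)$ is irreducible the only possibilities are $\ker M = \{\mathbf{0}\}$ or $\ker M = \F^n$. The second case forces $M = \mathbf{0}$. In the first case $M$ is injective on the finite-dimensional space $\F^n$, hence bijective, hence $M\in\GL$. A symmetric calculation, using $R_1(s)(Mv) = MR_2(s)v\in\mathrm{Im}\,M$ and the irreducibility of $R_1(S)$, works equally well on the image and would give the same dichotomy; either direction suffices, and I would include the kernel version as the primary argument.

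I do not anticipate any real technical obstacle: the argument is essentially a one-line application of the definition of invariant subspace on each side of the intertwining relation, together with the fact that an injective endomorphism of a finite-dimensional space is invertible. The only mild subtlety is the interpretive point about ``irreducible'' in the conclusion, noted above, which I would flag in a short parenthetical remark before invoking the lemma in the proof of Theorem~\ref{thm:main}.
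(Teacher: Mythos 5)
Your proposal is correct and follows essentially the same route as the paper's own proof: the classical Schur argument that the intertwining relation makes $\ker M$ an $R_2(S)$-invariant subspace, forcing $\ker M=\{\mathbf{0}\}$ (so $M$ is bijective) or $\ker M=\F^n$ (so $M=\mathbf{0}$); your reading of ``irreducible'' as ``invertible'' also matches the paper, whose proof concludes ``$M$ is bijective so that the matrix is invertible.'' The only difference is that you streamline to the kernel alone (using only irreducibility of $R_2(S)$), whereas the paper first shows the image of $M$ is $R_1(s)$-invariant --- written somewhat loosely as the set $M\M$ --- and then handles the kernel; your version is cleaner since $\ker M$ is honestly a subspace of $\F^n$, to which the paper's definition of irreducibility directly applies.
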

\begin{proof}
We have,
\begin{align*}
R_1(s)(M\M) &= (R_1(s)M)\M\\
&= MR_2(s)\M\\
&= M (R_2(s)\M)\subset M\M.
\end{align*}
Hence, $M\M$ is invariant under $R_1(s)$. By irreducibility, this is either $\mathbf{0}$ or $M_n$. The first case gives us $M=\mathbf{0}$. For the second case, assume $K$ is the kernel of $M$ as endomorphism of $\F^n$. Then,
\begin{align*}
M(R_2(s)K)&= (MR_2(s))K\\
&= (R_1(s)M)K\\
&= R_1(s)(MK)\\
&= R_1(s)(\mathbf{0})\\
&= \mathbf{0}.
\end{align*}
Hence, $K$ is invariant under $R_2(s)$. By irreducibility, we can only have $K=\mathbf{0}$. Thus $M$ is bijective so that the matrix is invertible.
\end{proof}
\begin{cor}\label{cor:1}
If $A$ is an irreducible subset of $M_n$ then, between centralizers, we have
\[
C_{\GL}(A)=C_{\M}(A)-\mathbf{0}
\]
\end{cor}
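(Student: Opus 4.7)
The plan is to obtain the corollary as an immediate consequence of Schur's Lemma by choosing the set $S$ and the two representations appropriately. Specifically, I would take $S = A$ itself and let both $R_1$ and $R_2$ be the inclusion map $A \hookrightarrow \M$. By the hypothesis that $A$ is irreducible, both $R_1(S) = A$ and $R_2(S) = A$ are irreducible subsets of $\M$, so the hypotheses of Schur's Lemma are satisfied.

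Next, I would unwind the definition of $C_{\M}(A)$: a matrix $M \in \M$ lies in $C_{\M}(A)$ exactly when $aM = Ma$ for every $a \in A$, which is precisely the condition $R_1(s)M = MR_2(s)$ for all $s \in S$. Applying Schur's Lemma then forces $M = \mathbf{0}$ or $M$ invertible, yielding the inclusion $C_{\M}(A) \setminus \{\mathbf{0}\} \subseteq \GL \cap C_{\M}(A) = C_{\GL}(A)$.

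For the reverse inclusion, I would simply note that $C_{\GL}(A) \subseteq C_{\M}(A)$ by definition and that $\mathbf{0} \notin \GL$, so $C_{\GL}(A) \subseteq C_{\M}(A) \setminus \{\mathbf{0}\}$. Combining both inclusions gives the equality.

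There is essentially no main obstacle here: the corollary is a direct specialization of Schur's Lemma to the case where one set acts on itself by left and right multiplication. The only subtlety worth flagging is the (harmless) typographical issue that the statement of Schur's Lemma above says ``$M$ is irreducible'' whereas the proof actually establishes that $M$ is invertible; it is the latter conclusion that we need, and it is exactly what the proof of the lemma provides.
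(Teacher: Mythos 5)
Your proof is correct and follows exactly the paper's approach: specialize Schur's Lemma with $R_1(S)=R_2(S)=A$, observe that the centralizer condition $Ma=aM$ is the intertwining condition, and conclude each nonzero centralizing matrix is invertible. The paper's own proof is a one-line version of the same argument (it omits the routine reverse inclusion you spell out), and your remark about the lemma's statement saying ``irreducible'' where its proof actually delivers ``invertible'' correctly identifies a typo in the paper.
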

\begin{proof}
In the previous lemma, we take $R_i(S)=A$ and by definition elements of centralizers satisfy $Ma=aM$, for all $a\in A$.
\end{proof}

Next result we need is about rational canonical form. Let $V$ be a finite dimensional vector space over a field $\F$. Let $M$ be a linear operator on $V$ and suppose $v$ is a vector in $V$.

\begin{itemize}
\item There exist a non-zero polynomial in $\F[x]$, $p_v(x)$ such that $p_v(M)v=0$. The minimal monic polynomial with such property is called the order of $v$. This is the minimal polynomial for the endomorphism $M$ restricted to $<v>$.
\item There exist $\lbrace v_i\rbrace $ in $V$, such that 
\[
V=<v_1>\oplus \cdots \oplus <v_r>,
\]
and the order $p_{i}$ of the $v_i$ are power of prime polynomials in $\F[x]$.
\item Finally, if $C_i$ are the companion matrix of $p_i$, then, $M$ is similar to the diagonal block matrix $\left(C_i\right)$. This new matrice is called the \textit{canonical rational form} of $M$.
\end{itemize}
\begin{lem}
Two matrices $A$ and $B$ are conjugate if they have the same canonical form (after rearranging blocks).
\end{lem}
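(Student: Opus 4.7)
The plan is to exploit the third bullet point stated just before the lemma, which asserts that every linear operator $M$ is similar to its canonical rational form. This reduces the claim to the assertion that any two block-diagonal matrices which differ only by a permutation of their companion blocks are similar.

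First I would write $A \sim F_A$ and $B \sim F_B$, where $F_A$ and $F_B$ denote the canonical rational forms of $A$ and $B$ respectively. By hypothesis there is a permutation $\sigma \in S_r$ of the indices of the companion blocks such that $F_B$ is obtained from $F_A$ by reordering its diagonal blocks according to $\sigma$; explicitly, if $F_A = \mathrm{diag}(C_1,\dots,C_r)$ then $F_B = \mathrm{diag}(C_{\sigma(1)},\dots,C_{\sigma(r)})$.

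The central step is to produce an explicit conjugator between $F_A$ and $F_B$. Decompose $\F^n = V_1 \oplus \cdots \oplus V_r$ where $V_i$ is the coordinate subspace of dimension $d_i = \deg p_i$ on which the $i$-th block of $F_A$ acts, and similarly $\F^n = W_1 \oplus \cdots \oplus W_r$ for $F_B$ with $\dim W_j = d_{\sigma(j)}$. Define a block permutation matrix $P$ by sending the ordered standard basis of $V_{\sigma(j)}$ to the ordered standard basis of $W_j$. A direct verification on each block shows $P^{-1} F_A P = F_B$.

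Finally, transitivity of similarity gives $A \sim F_A \sim F_B \sim B$, completing the proof. The only mildly delicate point is writing $P$ correctly when the companion blocks have different sizes, since one is not simply permuting single coordinates but rather contiguous groups of coordinates of varying length; this is bookkeeping rather than a genuine obstacle.
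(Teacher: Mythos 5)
Your proof is correct and follows the same route as the paper: both reduce the claim to the fact that $A$ and $B$ are each similar to their rational canonical forms and then invoke transitivity of similarity. The paper leaves the block-rearrangement step implicit, whereas you supply the explicit block permutation matrix conjugating $F_A$ to $F_B$; this is a welcome filling-in of detail rather than a different argument.
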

\begin{proof}
We just use the fact that these matrices are conjugate to their rational canonical form.
\end{proof}
\begin{cor}\label{cor:2}
All matrices in $\M$ with the same irreducible characteristic polynomials are conjugate.
\end{cor}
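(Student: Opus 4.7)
The plan is to feed the rational canonical form recalled above back into the hypothesis that the characteristic polynomial $f$ is irreducible, and squeeze the decomposition down to a single block.

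Concretely, given $M\in\M$ with characteristic polynomial $f$, the third bullet above produces a decomposition
\[
\F^n=\langle v_1\rangle\oplus\cdots\oplus\langle v_r\rangle,
\]
in which each summand is $M$-stable and the order $p_i$ of $v_i$ is a positive power of an irreducible polynomial. Since $M$ is similar to the block diagonal matrix with blocks $C_{p_i}$ (the companion matrix of $p_i$), its characteristic polynomial factors as $f=\prod_{i=1}^{r}p_i$: each companion block $C_{p_i}$ has characteristic polynomial $p_i$, and the characteristic polynomial of a block diagonal matrix is the product of those of the blocks.

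Now I exploit the irreducibility hypothesis. Each $p_i$ has degree at least $1$ and is a power of an irreducible polynomial, hence is itself a (possibly trivial) product of copies of a single irreducible. Writing $f$ as $\prod p_i$ gives a factorization of $f$ into polynomials of positive degree. Because $f$ is irreducible, there can only be one factor, so $r=1$ and $p_1=f$. Therefore the rational canonical form of $M$ is exactly $C_f$, independently of which matrix $M$ with characteristic polynomial $f$ we started from. Applying the preceding lemma, any two such matrices are conjugate.

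The only step that requires a moment of thought is the identification of the characteristic polynomial of the block diagonal form with $\prod p_i$; everything else is a direct application of the statements already proved in the excerpt, and the irreducibility of $f$ does the real work by collapsing the decomposition to a single cyclic summand.
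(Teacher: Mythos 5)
Your argument is correct and is essentially the paper's own proof, which simply asserts that irreducibility of the characteristic polynomial forces a single block in the rational canonical form; you have just filled in the justification (the characteristic polynomial is the product of the $p_i$, each of positive degree, so irreducibility forces $r=1$ and $p_1=f$). Nothing further is needed.
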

\begin{proof}
Since the characteristic polynomial is irreducible, there can only be one block in the canonical rational form.
\end{proof}

Finally, the last result we need, is taken from \cite{ST}.
\begin{lem}\label{lem:3}
Let $M$ be a matrix in $\M$. The centralizer of $M$ in $\M$ is equal to $\F\left[M\right]$ if and only if the characteristic polynomial of $M$ is irreducible. 
\end{lem}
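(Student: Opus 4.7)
The central idea is to view $\F^n$ as a module over the commutative ring $\F[M]$ and to exploit the fact that $\F[M]$ becomes a field precisely when the characteristic polynomial of $M$ is irreducible. The containment $\F[M]\subseteq C_{\M}(M)$ is automatic, since any polynomial in $M$ commutes with $M$, so in the forward direction only the reverse inclusion needs work, while the converse will be pursued by comparing $\F$-dimensions.

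For the forward direction, assume the characteristic polynomial $f$ is irreducible of degree $n$. Since the minimal polynomial of $M$ divides $f$, is non-constant, and $f$ is irreducible, the minimal polynomial must equal $f$; hence $\F[M]\cong \F[x]/(f)$ is a field (isomorphic to $\mathbb{F}_{q^n}$) of $\F$-dimension $n$. The equality of minimal and characteristic polynomials makes $M$ non-derogatory, so it admits a cyclic vector, and $\F^n$ is therefore a cyclic---hence one-dimensional---module over the field $\F[M]$. Any $N\in C_{\M}(M)$ commutes with every polynomial in $M$, so it is $\F[M]$-linear on $\F^n$; but an $\F[M]$-linear endomorphism of a one-dimensional $\F[M]$-space is scalar multiplication by some $\alpha\in \F[M]$, which unwinds to $N=\alpha\in \F[M]$.

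For the converse, I would argue contrapositively. If the minimal polynomial of $M$ has degree strictly less than $n$, then $\dim_{\F}\F[M]<n\leq \dim_{\F}C_{\M}(M)$, using the standard fact that the centralizer of any matrix has $\F$-dimension at least $n$, so equality cannot hold. This handles the case in which $f$ has a repeated irreducible factor and so forces the minimal polynomial to be a proper divisor of $f$. The remaining case, which I expect to be the main obstacle, is when $f$ factors non-trivially into distinct irreducibles yet still equals the minimal polynomial: here dimension counting alone is inadequate, and one must analyse the primary decomposition $\F^n=\bigoplus_i\ker p_i(M)$ and build a commuting operator that scales the summands differently, or appeal directly to the structural argument in~\cite{ST}. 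This step is where I would concentrate the effort in writing out a complete proof, since it is the only place where the geometry of the decomposition, rather than a crude count, really enters.
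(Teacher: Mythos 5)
Your proof of the forward implication is correct and complete: irreducibility forces the minimal polynomial to equal $f$, so $\F[M]\cong\F[x]/(f)$ is a field of degree $n$ over $\F$, the space $\F^n$ is one-dimensional over it, and every element of $C_{\M}(M)$ is $\F[M]$-linear, hence multiplication by an element of $\F[M]$. This is the only direction the main theorem actually uses (it is what gives $\left|C_{\M}(M)\right|=q^n$), and the paper itself offers no proof of the lemma, only a citation, so there is nothing on that side to compare against.

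The converse, however, contains a genuine gap, and it is not one that can be filled: the case you flag as ``the main obstacle'' is precisely where the stated equivalence fails. If $M$ is non-derogatory (minimal polynomial equal to the characteristic polynomial) but that polynomial is reducible, then $C_{\M}(M)=\F[M]$ still holds; the standard characterization, valid over any field, is that $C_{\M}(M)=\F[M]$ if and only if $M$ is cyclic. A concrete counterexample to the lemma as written is $M=\mathrm{diag}(0,1)$ over $\mathbb{F}_2$: its centralizer is the set of diagonal matrices, which is exactly $\F[M]=\lbrace 0,I,M,I+M\rbrace$, yet $x^2+x$ is reducible. Your proposed repair --- building a commuting operator that scales the primary summands $\ker p_i(M)$ differently --- cannot produce an element outside $\F[M]$, because by the Chinese remainder theorem the idempotent projections onto those summands are themselves polynomials in $M$. (A smaller slip: a repeated irreducible factor of $f$ does not force the minimal polynomial to be a proper divisor of $f$; the companion matrix of $p(x)^2$ has minimal polynomial $p^2$.) The correct statement, and presumably what the cited source actually proves, is that $C_{\M}(M)=\F[M]$ if and only if the minimal and characteristic polynomials coincide; the ``only if'' direction of the lemma is false as stated, so your inability to complete it reflects the statement, not your argument.
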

Note that this equivalence is essentially due to the fact that $\F$ is a finite field and $M$ has irreducible characteristic polynomial.

We are now ready to prove the main theorem.

\begin{proof}[Proof of theorem \ref{thm:main}]
We operate $\GL$ on $\M$ by conjugation. Then for a matrix $M$, with irreducible characteristic polynomial $f$, we have
\[
\left|Orb\left(M,\GL\right)\right|=\left[ \GL:Stab\left(M,\GL\right) \right].
\]
Now, the stabilizer of this operation is $Stab\left(M,\GL\right)=C_{\GL}(M)$. But since, $M$ has irreducible characteristic polynomial, then $\lbrace M\rbrace$ is irreducible in $\M$. Thus, by the corollary \ref{cor:1}, $Stab\left(M,\GL\right)=C_{\M}(M)-\mathbf{0}$. By lemma \ref{lem:3}, $\left|C_M(M)\right|=q^n$. Therefore, $\left|Stab\left(M,\GL\right)\right|=q^n-1$.
Now, corollary \ref{cor:2} tells us that the number of matrices with the same irreducible characteristic polynomial is $\left|Orb(M\GL)\right|$. Finally the number of invertible matrices is $\prod_{k=0}^{d-1}(q^d-q^k)$ and the result follows.
\end{proof}
\bibliographystyle{plain}
\bibliography{article}
\end{document}